\newtheorem*{thm*}{Theorem}
\newtheorem{thm}{Theorem}[section]
\newtheorem{lem}[thm]{Lemma}
\newtheorem{cor}[thm]{Corollary}
\theoremstyle{remark}
\newtheorem*{rem*}{Remark}
\NewDocumentCommand\e{ s O{} m }{%
	\IfBooleanTF{#1}{%
		\operatorname{e}_{#2}\parentheses*{#3}%
	}{\operatorname{e}_{#2}\parentheses{#3}}%
}
\newcommand{\meas}{\operatorname{meas}}
\DeclarePairedDelimiter\parentheses{\lparen}{\rparen}
\DeclarePairedDelimiter\braces{\lbrace}{\rbrace}
\NewDocumentCommand\set{ s o m o }{%
	\IfBooleanTF{#1}{\IfNoValueTF{#4}{\braces*{#3}}{\braces*{\,#3:#4\,}}}{%
	\IfNoValueTF{#2}{\IfNoValueTF{#4}{\braces{#3}}{\braces{\,#3:#4\,}}}{%
	\IfNoValueTF{#4}{\braces[#2]{#3}}{\braces[#2]{\,#3:#4\,}}}}%
}
\numberwithin{equation}{section}
\newcounter{@ToDo}
\newcommand{\todo@helper}[1]{%
	({\color{blue}TODO~\arabic{@ToDo}: {#1\@addpunct{.}}})%
}
\newcommand{\todo}[1]{\stepcounter{@ToDo}%
	\relax\ifmmode\text{\todo@helper{#1}}%
	\else\todo@helper{#1}\fi%
}
\title{On small fractional parts of polynomial-like functions}
\date{\today{}}
\author{Paolo~Minelli}
\address{
	Paolo~Minelli\\%
	Institut für Analysis und Zahlentheorie\\%
	TU~Graz\\
	Kopernikusgasse~24/II\\%
	8010~Graz\\%
	Austria}
\email{minelli@math.tugraz.at}
\begin{document}
\begin{abstract}
In a recent paper, Madritsch and Tichy established Diophantine inequalities for the fractional parts of polynomial-like functions. In particular, for $f(x)=x^k+x^c$ where $k$ is a positive integer and $c>1$ is a non-integer, and any fixed $\xi\in [0,1]$ they obtained 
\[\min_{2\leq p\leq X} \Vert \xi \lfloor f(p)\rfloor \Vert\ll_{k,c,\epsilon} X^{-\rho_1(c,k)+\epsilon}\]
for $\rho_1(c,k)>0$ explicitly given. In the present note, we improve upon their results in the case $c>k$ and $c>4$.
\end{abstract}
\maketitle

\section{Introduction and statement of results}
Vinogradov (1927) proved, answering a question stated by Hardy and Littlewood in the previous decade, that for any real number $\theta$ and $k\geq 2$ integer, one has
\begin{equation}\label{thm:Vinogradov:originalresult}
\min_{1\leq n\leq X} \Vert n^k \theta\Vert \ll X^{-\eta_1(k)+\epsilon},
\end{equation}

with the implied constant depending only on $k$ and $\epsilon$. This result was subsequently refined by Heilbronn \cite{Heilbronn1948} and several other authors. In particular, for small $k$, we mention the results of Zaharescu \cite{Zaharescu1995}, who obtained \cref{thm:Vinogradov:originalresult} with $\eta_1(2)=4/7$, which is, to the best of our knowledge, the actual record for $k=2$. For $k\geq 11$, the best known exponent for \cref{thm:Vinogradov:originalresult} is provided by Wooley \cite{Wooley1993}. We refer to Baker \cite{BakerIneq} and to the more up to date article of the same author \cite{Baker2016} for a comprehensive exposition of known results regarding \cref{thm:Vinogradov:originalresult}.
\\
Let now $k\geq 1$ and $f(x)=\sum_{j=1}^k a_kx^k$ be a polynomial with   at least one irrational coefficient The problem of establishing results of the form
\begin{equation}\label{prob:Davenport}
\min_{1\leq n\leq X} \Vert f(n)\Vert\ll X^{-\eta_2(k)+\epsilon},
\end{equation}

for $\eta_2(k)>0$ (conjecturally $\eta_2=1$) and implied constant depending only upon $k$ and $\epsilon$ was posed first by Davenport (1967). To the best of our knowledge, for a generic polynomial $f$, the best known results are due to Baker \cite{Baker2016}, \cite{Baker1982}, who proved \cref{prob:Davenport} for the exponents $\eta_2(k)=1/2k(k-1)$ (for $k\geq 8$) resp. $\eta_2(k)=2^{1-k}$ (for $2\leq k\leq 7$). An adjacent problem is to consider small fractional parts of polynomials over sparser sets. The case of primes obtained particular attention. The reader is refereed to the papers of Harman \cite{HarmanTrigI} and recent improvements due to Baker \cite{Bakerprimes2017}, \cite{Bakerprimes2018}. 
\\
\\
Now, in the present note, a \textit{pseudo polynomial} is a function $f:\mathbb{R}\to \mathbb{R}$ of the form
\begin{align}\label{pseudopolydef}
f(x)=\sum_{j=1}^d \alpha_jx^{\theta_j}
\end{align}
for $\alpha_j$ positive reals and $1\leq \theta_1<\theta_2<\dots\theta_d$, with at least one non integral $\theta_j$, $1\leq j\leq d$. We may split such a function into a \textit{polynomial part}, which we denote by $P$, and a part of the shape (\ref{pseudopolydef}), where \textit{all} the exponents $\theta_j$ are non integral. This second part will be called the \textit{pseudo-part} and denoted by $\phi$. Defining the degree of the pseudo part to be the largest exponent appearing in the representation and denoting this by $\deg(\phi)$ we may define (abusing notation) the degree of the $\deg(f)=\max(\deg(P), \deg(\phi))$, with $\deg(P)$ being the degree of the polynomial. We will call the pseudo polynomial \textit{dominant} if $\deg(f)=\deg(\phi)$, while reserving the name \textit{non-dominant} for the case $\deg(f)=\deg(P)$. Bergelson \textit{et.al.} \cite{TichyBergelson} proved, among other results concerning pseudo-polynomials, that  for a given pseudo polynomial $f$, the sequence $(f(p)_p)$ is uniformly distributed modulo 1. This motivated Madritsch and Tichy to investigate  Diophantine properties of pseudo polynomials. For the analogous Davenport's small fractional part problem, they obtained the following result: given any $\xi\in [0,1]$ we have
\begin{equation}\label{TichyMadritsch-resultovernaturals}
\min_{1\leq n\leq}\Vert \xi \lfloor f(p)\rfloor \Vert \ll_{f,\epsilon} X^{-\rho_1(f)+\epsilon},
\end{equation}
where the implicit constant depends upon $f$ and $\epsilon$, see \cite{Tichy2019}. For the analogous localized problem along primes, they established the following result
\begin{thm}[Madritsch-Tichy, \cite{Tichy2019}] \label{MTtheorem}
Given a pseudo polynomial $f$, any real $\xi$ and $X\in \mathbb{N}$ sufficiently large, there exists an exponent $\rho_1>0$ such that 
\begin{align}\label{MTexponent}
\min_{\substack{2\leq p\leq X\\ p \text{ prime}}}\Vert \xi \lfloor f(p)\rfloor \Vert \ll_f X^{-\rho_2(f)+\epsilon}.
\end{align}
\end{thm}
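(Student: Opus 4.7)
The plan is to attack the problem by contradiction via an exponential-sum mean square inequality. Assume that $\|\xi \lfloor f(p)\rfloor\| > \delta$ for every prime $p\leq X$, with $\delta=X^{-\rho_2(f)+\epsilon}$ to be determined. Using Vaaler's lemma one constructs a trigonometric polynomial $M(t)$ of degree at most $H=\delta^{-1}$ that majorises the indicator of a small interval around $0$ mod $1$ and whose Fourier coefficients decay sufficiently fast. Applying $M$ to $\xi\lfloor f(p)\rfloor$ and summing over primes produces the inequality
\begin{equation*}
\pi(X)\ll \delta\, \pi(X)+\sum_{1\leq h\leq H}\frac{1}{h}\,\Bigl|\,\sum_{2\leq p\leq X}\e{h\xi\lfloor f(p)\rfloor}\,\Bigr|,
\end{equation*}
so that a contradiction follows as soon as one can show that every inner sum is $\ll X^{1-\eta}$ for some $\eta>0$ independent of $h\leq H$.

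To remove the floor function one writes $\lfloor f(p)\rfloor=f(p)-\{f(p)\}$ and expands $\e*{-h\xi\{f(p)\}}$ once more by Vaaler, producing a finite linear combination of exponentials $\e*{h\xi f(p)+\ell f(p)}$ with $|\ell|\leq L$, $L$ a parameter to be optimised. The problem is thus reduced to estimating, uniformly in $h\leq H$ and $|\ell|\leq L$, the exponential sum over primes
\begin{equation*}
S(h,\ell)=\sum_{2\leq p\leq X}\e{(h\xi+\ell)\, f(p)}.
\end{equation*}
Here I would apply Vaughan's identity to $\log$-weighted indicators of primes, reducing $S(h,\ell)$ to type~I sums $\sum_{m\leq M}a_m\sum_{n\leq X/m}\e{(h\xi+\ell)f(mn)}$ and type~II sums with $M_1\leq m\leq M_2$ in the Heath-Brown range. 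The type~I sums are handled by truncating the trivial inner sum via a $k$-th application of van der Corput's $A$-process, exploiting the size of a high-order derivative of $f$; since $c>k$ and $c>4$, the pseudo-part dominates and the derivative estimate is governed by $|f^{(j)}(x)|\asymp x^{c-j}$. The type~II sums are estimated via Cauchy--Schwarz followed by Weyl differencing, again leveraging the same derivative bounds on the shifted function $f(mn)-f(m(n+r))$.

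The main obstacle, and the place where the hypotheses $c>k$ and $c>4$ come in, is the optimisation of the number $k_0$ of van der Corput/Weyl steps against the parameters $H$ and $L$: each application of van der Corput gains a factor involving $X^{(c-k_0)/(2^{k_0}-2)}$ but loses a power of $H$ (or of the differencing length), so one must balance these to obtain a non-trivial bound $S(h,\ell)\ll X^{1-\eta}$ uniformly in the ranges of $h,\ell$. In the regime $c>k$ the dominant exponent is controlled by the pseudo-part rather than the polynomial part, which simplifies the derivative estimates and allows one to take larger $k_0$ than in \cite{Tichy2019}. After carrying out the optimisation one reads off the permitted value $\rho_2(f)$.

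Finally one combines the above: insert the uniform bound on $S(h,\ell)$ into the Vaaler inequality, sum the harmonic factor $1/h$ to produce a $\log X$ loss that is absorbed in $X^\epsilon$, and derive a contradiction when $\rho_2(f)$ is chosen below the optimised threshold. The hard computational work is entirely encapsulated in the van der Corput and Vaughan steps; the Vaaler reduction and the closing contradiction are routine.
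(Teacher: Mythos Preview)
This theorem is quoted from \cite{Tichy2019} and is not proved in the present paper; the paper instead proves the sharper \cref{Ourtheorem} for dominant pseudo polynomials. Comparing your sketch to the paper's proof of \cref{Ourtheorem} (which follows the same architecture as \cite{Tichy2019}) exposes a genuine gap.

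Your reduction via Vaaler to the sums $S(h,\ell)=\sum_{p\leq X}e\bigl((h\xi+\ell)f(p)\bigr)$, to be bounded by $X^{1-\eta}$ \emph{uniformly} in $h\leq H$ and $|\ell|\leq L$, cannot succeed as stated: when $h\xi+\ell$ is very close to $0$ (equivalently $\|h\xi\|$ is tiny), the phase is essentially constant and $S(h,\ell)\sim\pi(X)$; no amount of van der Corput or Weyl differencing will help, since all derivative tests require the relevant derivative to be bounded \emph{below}. You never isolate or treat this case. The paper's argument avoids it by a different opening: rather than Vaaler, it applies \cref{montgomery} to the contradiction hypothesis, extracting a \emph{single} $m\leq M$ with $\bigl|\sum_{p\leq X}e(m\xi\lfloor f(p)\rfloor)\bigr|\gg X^{1-\tilde\rho}$. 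After removing the floor (digital expansion plus Vaaler), the resulting prime exponential sums are bounded \emph{only under the side assumption} $\|m\xi+h\|\gg X^{-2\theta/3+1/4}$. The complementary case, where $m\xi$ is abnormally close to an integer, is disposed of by an entirely separate device (\cref{lemmamultiplesofm}): one exhibits a small prime $p\ll X^{1/3+\epsilon}$ with $m\mid\lfloor f(p)\rfloor$, whence $\|\xi\lfloor f(p)\rfloor\|\ll\|m\xi\|\cdot\lfloor f(p)\rfloor/m$ is directly small, contradicting the standing assumption. This dichotomy---exponential-sum bound versus arithmetic divisibility---is the idea missing from your plan.

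A smaller issue: you invoke the hypotheses ``$c>k$ and $c>4$'', which belong to the paper's improvement, not to the Madritsch--Tichy theorem you were asked to prove; the latter covers arbitrary pseudo polynomials.
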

The exponents $\rho_1(f), \rho_2(f)$  were explicitly given for any $f$. They depend on a rather technical expression in $\deg(\phi)$ and $\deg(P)$, with  distinctions depending upon $f$ being dominant or not. In particular, for the so called Piatetski-Shapiro sequence $(\lfloor n^c\rfloor+n^k)_{n\geq 1}$ with $c>1$ they obtained, taking $f(x)=x^c+x^k$ the following corollary
\begin{cor}[Madritsch-Tichy]
Let $\xi$ be real, let $c>1$ be non integral and $f(x)=x^c+x^k$, then, for any $\epsilon>0$ we have
\begin{equation}\label{MTpjatetski}
\min_{\substack{2\leq p\leq X\\ p \text{ prime}}}\Vert \xi \lfloor f(p)\rfloor \Vert \ll_{c,k, \epsilon} X^{-\rho_2(c, k)+\epsilon},
\end{equation}
where
\[
    \rho_2(c, k)= 
\begin{cases}
    \frac{1}{2(2^{\lceil c\rceil+1}-1)},& \text{if } c>k\\
    \frac{1}{4^{k-1}(k+2)},              & \text{if } c<k.
    \end{cases}
\]
\end{cor}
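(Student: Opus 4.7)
My plan is to obtain this corollary as an immediate specialization of \cref{MTtheorem}, which already furnishes the exponent $\rho_2(f)$ for an arbitrary pseudo polynomial $f$ in terms of $\deg(P)$ and $\deg(\phi)$. No new analytic input is needed; the work is in correctly unpacking the general exponent for a two-term pseudo polynomial.

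The first step is to cast $f(x) = x^c + x^k$ into the shape prescribed by \cref{pseudopolydef}: take $d = 2$, $\alpha_1 = \alpha_2 = 1$, and the ordered exponents $\theta_1 = \min(c,k)$, $\theta_2 = \max(c,k)$. The hypothesis that $c>1$ is non-integer guarantees that at least one $\theta_j$ is non-integral, so $f$ qualifies as a pseudo polynomial. The canonical decomposition of the excerpt then gives polynomial part $P(x) = x^k$ with $\deg(P) = k$ and pseudo part $\phi(x) = x^c$ with $\deg(\phi) = c$.

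The second step is the case split. Since $c$ is non-integer we have $c \neq k$, so either $c > k$ or $c < k$. When $c > k$, $\deg(f) = \deg(\phi) = c$, so $f$ is dominant; I would substitute $\deg(\phi) = c$ into the dominant branch of the Madritsch--Tichy exponent, which produces $\rho_2(c,k) = 1/\bigl(2(2^{\lceil c\rceil+1}-1)\bigr)$. When $c < k$, $\deg(f) = \deg(P) = k$, so $f$ is non-dominant; substituting $\deg(P) = k$ into the non-dominant branch produces $\rho_2(c,k) = 1/\bigl(4^{k-1}(k+2)\bigr)$. Both exponents are strictly positive, so \cref{MTtheorem} delivers the required bound
\[
\min_{\substack{2\leq p\leq X\\ p\text{ prime}}} \Vert \xi \lfloor f(p) \rfloor \Vert \ll_{c,k,\epsilon} X^{-\rho_2(c,k)+\epsilon}
\]
in each regime.

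The main (and really only) obstacle is bookkeeping: checking that the rather technical general formula for $\rho_2(f)$ proved in \cite{Tichy2019} indeed collapses, under the substitutions $d=2$, $\deg(P)=k$, $\deg(\phi)=c$, to the compact closed forms displayed in the corollary. Because each branch of the general exponent involves only the governing degree when $f$ is a two-term pseudo polynomial, this simplification is mechanical. The implicit constant inherits the dependence on $f$, and hence on $c$ and $k$, from \cref{MTtheorem}.
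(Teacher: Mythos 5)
Your approach is correct and is exactly what the paper intends: the corollary is a direct specialization of \cref{MTtheorem} obtained by taking $f(x)=x^c+x^k$, reading off $\deg(P)=k$ and $\deg(\phi)=c$, and splitting on whether $c>k$ (dominant) or $c<k$ (non-dominant). The paper itself offers no separate proof of the corollary, treating it as immediate from the Madritsch--Tichy theorem, so your route matches the paper's. The one thing you leave unverified is the actual algebraic collapse of the general exponent from \cite{Tichy2019} to the two displayed closed forms; the present paper does not display that general formula either, so this is a genuine (if small) omission shared with the source, not a defect peculiar to your argument.
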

\subsection{The goal:} In this work we focus on improving upon these two results in the case $f$ is dominant. In particular, the exponent we obtain supersedes that in \cite{Tichy2019} whenever $deg(f)>4$, We are also able to obtain improvements in the non-dominant case for $f$ with further additional mild conditions. However, for the purpose of keeping the present document as short as possible, we decided to keep the two cases separated.
\subsection{Our results}
\begin{thm}\label{Ourtheorem}
Let $f$ be a dominant pseudo polynomial of degree $\theta>3$ and let  $\xi$ be a real number. Then, we have
\begin{align}\label{ourexponent}
\min_{\substack{2\leq p\leq X\\ p \text{ prime}}}\Vert \xi \lfloor f(p)\rfloor \Vert \ll_f X^{\rho_d+\epsilon},
\end{align}
where
\begin{align*}
\rho_d=\frac{1}{3}\frac{1}{8\theta^2+12\theta+10}.
\end{align*}
\end{thm}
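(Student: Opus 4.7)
The plan is to follow the overall architecture of Madritsch--Tichy, but to replace their exponential-sum input with a sharper van~der~Corput-type bound that fully exploits the dominance of~$f$.

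First, by the standard Erd\H{o}s--Tur\'an / Vinogradov reduction, establishing \eqref{ourexponent} reduces to proving a nontrivial upper bound on
$$S(h) := \sum_{\substack{2\leq p\leq X\\ p\text{ prime}}} \e{h\xi \lfloor f(p)\rfloor}$$
uniformly for $h$ in a dyadic range $h\asymp H$ with $H = X^{\rho_d}$, with a saving of order $X^{1-\epsilon}/H$. Writing $\lfloor f(p)\rfloor = f(p) - \psi(f(p))$ with $\psi(y):=y-\lfloor y\rfloor$ and applying Vaaler's approximation to the one-periodic factor $\e{-h\xi\psi(y)}$, truncated at a parameter $M$ to be chosen, we reduce $S(h)$ to a short linear combination of sums
$$T(\alpha) := \sum_{p\leq X}\e{\alpha f(p)},\qquad \alpha = h\xi + m,\ \abs{m}\leq M,$$
plus an acceptable error controlled by the Vaaler weights.

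Next, a Vaughan or Heath--Brown identity decomposes $T(\alpha)$ into Type~I sums $\sum_{m\leq U}\sum_{n\leq X/m} a_m\, \e{\alpha f(mn)}$ and Type~II sums $\sum_{U<m\leq V}\sum_n a_m b_n \,\e{\alpha f(mn)}$. The key structural input is dominance: the leading term $\alpha_d x^{\theta}$ has non-integer exponent $\theta>3$, so for every integer $k\geq 1$ the derivative $\partial_n^k f(mn)$ has a clean, non-zero size $\asymp m^k (mn)^{\theta-k}$, with no cancellation coming from competing integer-exponent top terms. This is what lets us bound the Type~I sums by applying the van~der~Corput $A^{k-1}B$-process directly to the smooth inner sum in~$n$, where $k$ is chosen in terms of~$\theta$; in contrast to the Weyl-differencing approach forced upon \cite{Tichy2019} by the treatment of the polynomial part, this already gives a polynomial-in-$\theta$ (rather than exponential-in-$\theta$) saving.

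For the Type~II sums, one Cauchy--Schwarz step followed by a $B$-process (Weyl differencing in~$n$ together with a Poisson/stationary-phase analysis of the differenced smooth phase) produces a new smooth exponential sum of the same analytic shape, to which we re-apply the derivative test; the quadratic dependence on~$\theta$ in the denominator of $\rho_d$ reflects precisely the cost of this double application of derivative tests of order depending on~$\theta$. Balancing the Type~I cut-off~$U$, the Type~II cut-off~$V$, the Vaaler truncation~$M$, and the dual range~$H$ will then produce the claimed exponent $\rho_d=1/(3(8\theta^2+12\theta+10))$. I expect the main obstacle to be the Type~II estimate: the difference parameter in the $B$-process has to be chosen carefully so that the resulting smooth sum still admits the polynomial-in-$\theta$ derivative-test saving coming from dominance, without the bilinear Cauchy--Schwarz losses swamping the gain over the corresponding step in \cite{Tichy2019}.
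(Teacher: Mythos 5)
Your outline of the exponential-sum machinery is broadly aligned with the paper: the paper also works through a Heath--Brown/Vaughan-type decomposition into Type~I and Type~II sums (Lemma~\ref{HBVaughan}), uses Cauchy--Schwarz and a $B$-process on the Type~II sums, and replaces Weyl differencing with a $k$-derivative test (Heath--Brown's Lemma~\ref{HBlemma}) to get a saving polynomial in $\theta$, exploiting exactly the dominance structure you describe. That part of your plan is on the right track.

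However, there is a genuine gap in the first reduction. You propose to pass through a ``standard Erd\H{o}s--Tur\'an / Vinogradov reduction'' to a uniform exponential-sum bound on $\sum_{p\le X}\e{(h\xi+m)f(p)}$. This cannot work as stated: if $\xi$ is rational (or very close to a rational) with denominator $q\le X^{\rho_d}$, the sequence $\xi\lfloor f(p)\rfloor$ has discrepancy $\gg 1/q\ge X^{-\rho_d}$, so no Erd\H{o}s--Tur\'an bound of the desired strength is available, and more concretely the coefficient $h\xi+m$ can be $o(X^{-\theta})$, in which case $\sum_{p\le X}\e{(h\xi+m)f(p)}$ has essentially no cancellation. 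The paper avoids this by arguing by contradiction via Montgomery's spacing lemma (Lemma~\ref{montgomery}), which produces a \emph{single} bad modulus $m$, and then splitting into two regimes. When $\Vert m\xi+h\Vert$ is not too small, it removes the floor by a ``digital expansion'' into $q$ subintervals of $[0,1)$ followed by Vaaler's approximation of the \emph{indicator} of each subinterval (Lemma~\ref{Vaaler}) --- note your plan of applying ``Vaaler's approximation'' to the periodic factor $\e{-h\xi\psi(y)}$ is not quite Vaaler's lemma, which approximates indicators/sawtooths, not arbitrary one-periodic functions --- and then balances the error $O(Xm/q)$ against the exponential-sum saving; this balancing is precisely the source of the factor $1/3$ in $\rho_d=\rho/3$, which your sketch never explains. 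When $\Vert m\xi+h\Vert$ \emph{is} small (the ``major arc'' case), the exponential sums are useless and the paper instead invokes Lemma~\ref{lemmamultiplesofm}, which guarantees a prime $p\ll X^{1/3+\epsilon}$ with $m\mid\lfloor f(p)\rfloor$, whence $\Vert\xi\lfloor f(p)\rfloor\Vert$ is automatically small. This divisibility lemma (which itself rests on the same prime exponential-sum estimate applied in a shorter range) is essential to the proof and is entirely absent from your plan; without it, the argument does not close.
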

From this we descend the following corollary
\begin{cor}
Let $\xi$ be real, let $k$ be a positive integer and let $c>3$ be a non integer real number with $c>k$. Let $f(x)=x^c+x^k$, then, for any $\epsilon>0$ we have
\begin{equation}\label{ourpjatetski}
\min_{\substack{2\leq p\leq N\\ p \text{ prime}}}\Vert \xi \lfloor f(p)\rfloor \Vert \ll_{c,k, \epsilon} N^{-\rho_d+\epsilon},
\end{equation}
where
$$\rho_d=\frac{1}{24c^2+36c+30}$$
\end{cor}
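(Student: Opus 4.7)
The plan is to recognise the corollary as an immediate specialisation of Theorem~\ref{Ourtheorem}, so the work reduces to verifying that the specific $f$ of Piatetski--Shapiro type is a dominant pseudo polynomial of degree $c$ and then substituting $\theta=c$ into the exponent formula. I would proceed in three short steps.

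First, I would decompose $f(x)=x^c+x^k$ into its polynomial and pseudo parts according to the definition in~\eqref{pseudopolydef}. Because $c>1$ is assumed non-integer while $k$ is a positive integer, the unique non-integral exponent in the expansion of $f$ is $c$. Hence the pseudo-part is $\phi(x)=x^c$ and the polynomial part is $P(x)=x^k$, yielding $\deg(\phi)=c$ and $\deg(P)=k$.

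Second, I would verify the two hypotheses of Theorem~\ref{Ourtheorem}. The standing assumption $c>k$ gives $\deg(\phi)>\deg(P)$, so $\deg(f)=\deg(\phi)=c$ and $f$ is dominant in the sense of the definition preceding Theorem~\ref{MTtheorem}. The standing assumption $c>3$ then translates to $\theta:=\deg(f)=c>3$, exactly as required.

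Third, I would apply Theorem~\ref{Ourtheorem} with $\theta=c$. The general bound
\[
\min_{\substack{2\leq p\leq N\\ p\text{ prime}}}\Vert \xi\lfloor f(p)\rfloor\Vert \ll_{f} N^{-\rho_d+\epsilon}
\]
then holds with
\[
\rho_d=\frac{1}{3}\cdot\frac{1}{8c^2+12c+10}=\frac{1}{24c^2+36c+30},
\]
which is precisely the claimed value; the dependence on $f$ becomes the dependence on $(c,k)$ stated in the corollary. There is essentially no obstacle here: the corollary is a direct specialisation, and the only conceptual content is to recognise that the Piatetski--Shapiro choice $x^c+x^k$ with $c>k$ falls into the dominant regime covered by Theorem~\ref{Ourtheorem}.
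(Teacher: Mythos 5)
Your proposal is correct and is exactly the intended argument: the corollary is a direct specialisation of Theorem~\ref{Ourtheorem}, obtained by observing that $f(x)=x^c+x^k$ with $c>k$ non-integral and $k\in\mathbb{Z}_{>0}$ has pseudo-part $x^c$ and polynomial part $x^k$, hence is dominant of degree $\theta=c>3$, after which one substitutes $\theta=c$ into $\rho_d=\tfrac{1}{3}\cdot\tfrac{1}{8\theta^2+12\theta+10}$ and simplifies. The paper presents the corollary without further proof, implicitly relying on precisely this specialisation, so your route matches the author's.
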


The improvements we obtain is a consequence of a combination of a relatively recent new derivative test (see Lemma \ref{HBlemma}), a more careful estimation of type II sums (see proof of Lemma \ref{T2estimate}) and some additional care in the second part of the argument used by Madritsch and Tichy (see \cref{lemmamultiplesofm}).

\subsection{Notation:} 
In the present work, the letter $p$ will always indicate a prime number. As usual, by $\Vert x\Vert$ we denote the distance of $x$ from the nearest integer. For dyadic sums as $\sum_{X/2< n\leq X}$ we will write $\sum_{n\sim X}$. We will write $d_m(n):=\sum_{n_1\dots n_m=n} 1$ and say  that a sequence is divisor bounded it if is bounded by $d_4$ (i.e $\vert a_n\vert \leq d_4(n)$). Finally, we will employ the standard notation $e(x):=e^{2\pi i x}$.

\subsection{Acknowledgements}
The author thanks Marc Technau and his advisor Prof. R. F. Tichy for various helpful discussions. The author was supported by FWF project I-3466.

\section{Preparatory lemmas}
In this section we state all the lemmas we will require for the proof of \cref{Ourtheorem}. The first two lemmas are standard. The first will be used to remove the floor function in the proof of \cref{Ourtheorem}.  \cref{HBlemma} is is realatively recent and will be our main tool in estimating exponential sums. Finally, \cref{primesums} and \cref{lemmamultiplesofm} will be the two key ingredients for the proof. \cref{lemmamultiplesofm} may be regarded as a substitute of the well known principle for Weyl sums: one either has "good estimates" or the coefficients of the polynomial are well approximated by ratios with a common denominator $q$ of appropriate size.
\begin{lem}[Vaaler, see e.g. \cite{Vaaler}]\label{Vaaler}
Let $I$ be some interval modulo one and let $1_I$ denote its indicator function. Then for every positive integer $H$ there are coefficients $c_h=c_h(I, H)$, with 
\begin{align*}
c_0:=\meas(I \cap [0,1)), \quad \vert c_h\vert \leq \frac{1}{\vert h\vert +1},
\end{align*}
such that the difference 
\begin{align*}
\Delta_{I, H}(t)=1_I(t)-\sum_{0 \leq \vert h\vert \leq H } c_h e(ht)
\end{align*}
satisfies 
\begin{align*}
\vert \Delta_{I,H}(t)\vert \leq \frac{1}{2H	}\sum_{0\leq h\leq H} \left(1-\frac{\vert h\vert}{H}\right) e(ht).
\end{align*}
\end{lem}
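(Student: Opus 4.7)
The plan is to follow Vaaler's original construction via the Beurling--Selberg extremal majorants and minorants of the sign function. First, recall the \emph{Beurling function} $B(z)$: an entire function of exponential type $2\pi$ majorizing $\operatorname{sgn}(x)$ on $\RR$ and minimizing $\int_{\RR}(B(x)-\operatorname{sgn}(x))\,dx = 1$, together with its counterpart minorizing $\operatorname{sgn}$. Explicit formulas due to Beurling and Selberg produce closed forms for the Fourier transforms of these extremals; the transforms are supported in $[-1,1]$, with sharp pointwise and moment bounds that will ultimately drive the coefficient estimate $|c_h| \leq (|h|+1)^{-1}$.

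Next, I would build majorant/minorant trigonometric polynomials of degree at most $H$ for $\indic_I$ on $\RR/\ZZ$. Writing $I = [a,b] \pmod 1$ and using $\indic_{[a,b]}(x) = \tfrac12\bigl(\operatorname{sgn}(x-a)+\operatorname{sgn}(b-x)\bigr)$, one rescales the Beurling majorants by a factor of $H$ so that their Fourier transforms are supported in $[-H,H]$, and then applies Poisson summation (periodization) to obtain trigonometric polynomials $M_H^{\pm}$ of degree $\leq H$ with $M_H^- \leq \indic_I \leq M_H^+$. The constant terms come out to $\meas(I\cap[0,1)) \pm \frac{1}{H+1}$.

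I would then define $c_h$ as the Fourier coefficients of $(M_H^+ + M_H^-)/2$. From the explicit Fourier-side data for the Beurling--Selberg majorants one reads off $c_0 = \meas(I \cap [0,1))$ and the bound $|c_h| \leq (|h|+1)^{-1}$ (the cancellation between $M_H^+$ and $M_H^-$ is what gives this refinement over the naive estimate $|\widehat{\indic_I}(h)| \leq (\pi|h|)^{-1}$). For the error, the containment $M_H^- \leq \indic_I \leq M_H^+$ immediately yields $|\Delta_{I,H}(t)| \leq \tfrac12(M_H^+ - M_H^-)(t)$, and a direct calculation shows that the difference $M_H^+ - M_H^-$ is a nonnegative trigonometric polynomial of Fej\'er type, with Fourier expansion proportional to $\frac{1}{H}\sum_{|h|\leq H}(1-|h|/H)e(ht)$. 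Folding this using $c_{-h}=\overline{c_h}$ and taking the appropriate real-part expression produces the one-sided sum displayed in the statement.

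The main technical obstacle is the explicit Fourier-side analysis of the Beurling--Selberg extremals: one must verify both the sharp coefficient bound $|c_h| \leq (|h|+1)^{-1}$ and the identification of the remainder as a nonnegative Fej\'er-like polynomial of degree $H$. As this is a classical result, thoroughly documented in Vaaler's original paper, I would cite rather than reproduce these computations here.
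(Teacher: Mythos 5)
The paper itself does not prove this lemma---it is stated and attributed to Vaaler with a citation, so there is no ``paper proof'' to compare against. Your sketch correctly identifies the standard route: Beurling--Selberg extremal majorants and minorants of $\operatorname{sgn}$, rescaling so the Fourier transforms live in $[-H,H]$, periodization to obtain trigonometric polynomials $M_H^{\pm}$ of degree $\leq H$ sandwiching $\indic_I$, taking $c_h$ to be the coefficients of $(M_H^+ + M_H^-)/2$, and bounding the error by the Fej\'er-type polynomial $\tfrac12(M_H^+-M_H^-)$. This is exactly the content of Vaaler's construction, and since the paper also leaves the details to \cite{Vaaler}, citing rather than reproducing the Fourier-side computations is consistent with the paper's level of detail.

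Two small caveats. First, your aside that ``the cancellation between $M_H^+$ and $M_H^-$ is what gives the refinement over the naive estimate $|\widehat{\indic_I}(h)|\leq(\pi|h|)^{-1}$'' is off the mark: the $c_h$ are coefficients of the approximant, not of $\indic_I$, and for most $h$ the naive bound on $\widehat{\indic_I}(h)$ is in fact smaller than $(|h|+1)^{-1}$; the point of the $c_h$ bound is uniformity and compatibility with the sharp Fej\'er remainder, not an improvement on the raw Fourier coefficient of the interval. Second, the displayed error bound in the lemma as printed (a one-sided sum $\sum_{0\leq h\leq H}$ with weights $1-|h|/H$ and prefactor $1/(2H)$) is not the standard Fej\'er kernel; the usual statement has the symmetric sum $\sum_{|h|\leq H}$ with weights $1-|h|/(H+1)$ and prefactor $1/(2H+2)$, which is manifestly real and nonnegative. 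You noticed the discrepancy and gestured at ``folding'' via $c_{-h}=\overline{c_h}$, but this does not actually reconcile the two forms; it is simply a minor typo in the paper's statement and is harmless for the application, where only absolute values of the individual terms are used.
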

Next we would require the following lemma from \cite{HBShapiro}
\begin{lem}[Heath-Brown, \cite{HBShapiro}]\label{HBVaughan}
Let $3\leq U<V<Z<X$ and suppose that $z$ is an half integer. Assume further that these variables satisfy $Z\geq 4U^2$, $X\geq 64Z^2U$, $V^3\geq 32X$. Let now $f$ be a function supported in $[X/2, X]$ with $\vert f(n)\vert \leq f_0$. Then
\begin{align*}
\Big\vert \sum_{n\sim X} \Lambda(n)f(n)\Big\vert \ll f_0+K\log X+L\log^8 X,
\end{align*}
where 
\begin{align*}
K:=\max_{N}\sum_{m=1}^\infty d_3(m)\Big\vert \sum_{Z< n\leq N} f(mn)\Big\vert,
\end{align*}
and
\begin{align*}
L:=\sup \sum_{m=1}^\infty d_4(m)\Big\vert \sum_{U<n<V} g(n)f(mn)\Big\vert
\end{align*}
where the supremum is taken on all arithmetic functions fulfilling $\vert g(n)\vert \leq d_3(n)$. 
\end{lem}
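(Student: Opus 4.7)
The plan is to deploy a Heath-Brown-type combinatorial identity for the von Mangoldt function. For a suitable $K$ (here $K=3$ will do) and a half-integer $z$ with $2z^{K}\geq X$, the identity reads
\[
\Lambda(n)=\sum_{k=1}^{K}(-1)^{k-1}\binom{K}{k}\!\!\sum_{\substack{n=n_1\cdots n_{2k}\\ n_1,\ldots,n_k\leq z}}\!\!\mu(n_1)\cdots\mu(n_k)\log(n_{2k}),\qquad n\sim X.
\]
Inserting this into $\sum_{n\sim X}\Lambda(n)f(n)$ and exchanging orders of summation reduces the problem to estimating $O_K(1)$ multilinear sums of the form
\[
\sum_{\substack{m_1\cdots m_k\,n_1\cdots n_k\sim X\\ m_i\leq z}}\mu(m_1)\cdots\mu(m_k)\log(n_1)\,f(m_1\cdots m_k n_1\cdots n_k).
\]

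Next I would apply a dyadic decomposition in every variable, producing $O((\log X)^{2k-1})$ blocks. For each block, with dyadic sizes $N_1,\ldots,N_{2k}$ subject to $\prod_i N_i\asymp X$ and $N_i\leq 2z$ for $i\leq k$, I would apply a Type~I / Type~II dichotomy. A block is of \textbf{Type~I} if some $N_j$ with $j>k$ exceeds $Z$; collapsing all other variables into a single variable $m$ (with weight majorized by $d_{2k-1}(m)(\log X)^{C}$, hence by $d_3(m)(\log X)^{C}$ after grouping factors) yields an inner sum of the shape $\vert\sum_{Z<n\leq N_j} f(mn)\vert$, which is controlled by $K$. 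Otherwise the block is of \textbf{Type~II}: all $N_j$ are smaller than $Z$, and the conditions $Z\geq 4U^{2}$, $X\geq 64 Z^{2} U$, and $V^{3}\geq 32X$ let one exhibit, by a greedy product argument on the $N_i$'s, a subset of indices whose product falls inside $(U,V)$. Collapsing that subset into a variable $n\in(U,V)$ (weight $\leq d_{3}(n)$) and the rest into a single $m$ (weight $\leq d_{4}(m)$) realizes the block in the form controlled by $L$. The residual $f_0$ term absorbs the trivial contributions from the extreme terms of the identity.

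The main obstacle is the greedy packing step in the Type~II branch: one must check that \emph{no} configuration of dyadic sizes $N_1,\ldots,N_{2k}$ with $\prod_i N_i\asymp X$, $N_i\leq 2z$ for $i\leq k$, and every $N_j<Z$ can avoid having some subset product in $(U,V)$. The hypotheses are calibrated precisely for this: $Z\geq 4U^{2}$ prevents a gap from opening between two factors below $U$, $V^{3}\geq 32X$ rules out three or more factors wedged simultaneously between $V$ and $Z$, and $X\geq 64 Z^{2} U$ forces the Type~I case to occur whenever only one or two factors are ``large''. Stitching these subcases together while tracking the divisor-function weights so that the final coefficients remain bounded by $d_3$ or $d_4$ is the most delicate part, and is exactly what fixes the numerical constants appearing in the statement.
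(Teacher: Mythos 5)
The paper does not prove this lemma; it is imported verbatim (up to notation) from Heath-Brown's paper on the Pjateckii--Sapiro prime number theorem, so there is no ``paper's own proof'' against which to compare. Your outline does follow Heath-Brown's own strategy: his combinatorial identity for $\Lambda$ with $z$ a half integer, a full dyadic decomposition, and a Type~I / Type~II dichotomy calibrated by the hypotheses $Z\geq 4U^2$, $X\geq 64 Z^2 U$, $V^3\geq 32X$; so this is not a different route.

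That said, the sketch leaves unresolved precisely the point you flag at the end, and that point carries the whole weight of the lemma. After inserting the $j=3$ term of the identity and decomposing dyadically, one has six variables $n_1,\dots,n_6$ (three of them $\mu$-weighted and $\leq z$); naively collapsing five of them into a single $m$ yields a coefficient bounded only by $d_5(m)$, and a lopsided Type~II split can a priori produce a $(d_1,d_5)$ pairing rather than the stated $(d_3,d_4)$. Thus obtaining $d_3$ in $K$ and $(d_3,d_4)$ in $L$ is not bookkeeping to be ``stitched together'' afterwards: it is a genuine case analysis on the dyadic sizes $N_1,\dots,N_{2j}$, using both the constraint that the $\mu$-weighted variables are $\leq z\leq Z$ and the three numerical inequalities, to show that in every surviving configuration the variables regroup into at most three residual factors for Type~I, or into a $(U,V)$-window factor built from at most three variables plus a complementary factor built from at most four for Type~II. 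Your proposal correctly locates where this argument belongs and what the hypotheses are for, but does not carry it out; as written it would only prove a version of the lemma with weaker divisor weights, not the statement as given.
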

\begin{lem}[see e.g \cite{BakerIneq}]\label{montgomery}
Let $x_n$ be a sequence of reals with $\Vert x_n\Vert \geq \frac{1}{M}$ for $1\leq n \leq N$. Then
\begin{align*}
\sum_{m\leq M} \Big\vert \sum_{n\leq N} e\left(mx_n\right)\Big\vert>\frac{N}{6}.
\end{align*}
\end{lem}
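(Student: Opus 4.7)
The plan is to invoke the Beurling--Selberg extremal function technique. Let $I := (-1/M,\, 1/M) \pmod 1$. By the classical Selberg construction (see e.g.\ Montgomery's \emph{Ten Lectures on the Interface between Analytic Number Theory and Harmonic Analysis}, Chapter~1, or Vaaler's original work cited above for \cref{Vaaler}), there exists a trigonometric polynomial
\[
\sigma(x) = \sum_{|m| \leq M} \hat\sigma(m)\, e(mx)
\]
of degree at most $M$ minorizing $\indic_I$, i.e.\ $\sigma(x) \leq \indic_I(x)$ for every $x \in \RR/\ZZ$, with $\hat\sigma(0) \geq 2/M - 1/(M+1) > 1/M$, while the non-zero Fourier coefficients obey $|\hat\sigma(m)| \leq 2/M + 1/(M+1) < 3/M$ for $1 \leq |m| \leq M$; these bounds follow from the standard comparison with $\widehat{\indic_I}$, whose $0$-th coefficient is exactly $|I|=2/M$ and whose other coefficients are bounded by $2/M$ in modulus.

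Granted this, the hypothesis $\Vert x_n\Vert \geq 1/M$ forces $\indic_I(x_n) = 0$ and hence $\sigma(x_n) \leq 0$ for every $n$. Summing and expanding via the Fourier representation of $\sigma$ yields
\[
0 \geq \sum_{n=1}^{N} \sigma(x_n) = \hat\sigma(0)\,N + \sum_{0<|m|\leq M} \hat\sigma(m)\,S_m,
\qquad S_m := \sum_{n=1}^{N} e(mx_n).
\]
Isolating the main term and invoking both the triangle inequality and the symmetry $|S_{-m}|=|S_m|$ then produces the chain
\[
\frac{N}{M} < \hat\sigma(0)\,N \leq \Bigl|\sum_{0<|m|\leq M}\hat\sigma(m)\,S_m\Bigr| \leq 2\max_{1 \leq |m| \leq M}|\hat\sigma(m)| \cdot \sum_{m=1}^{M}|S_m| < \frac{6}{M}\sum_{m=1}^{M}|S_m|,
\]
from which $\sum_{m=1}^{M}|S_m| > N/6$ follows upon multiplying through by $M/6$.

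The main obstacle is not the short deduction itself but the availability of the extremal minorant $\sigma$ with the stated control on its Fourier coefficients; this is classical Beurling--Selberg theory and is most efficiently invoked as a black box. A more elementary attempt based on a Fejér-type non-negative kernel yields the desired conclusion only for small $M$ (the pointwise bound $F_M(x) \leq M/4$ valid on $\Vert x\Vert \geq 1/M$ is insufficient once one passes to the average), and it is really the fine cancellation built into the Selberg minorant that is responsible for the uniform constant $1/6$.
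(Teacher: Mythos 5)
The paper does not actually supply a proof of this lemma: it simply cites Baker's monograph (\cite{BakerIneq}) as a reference. So there is no internal argument to compare against; the question is only whether your proof is correct and whether it is the standard route.

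Your Selberg-minorant argument is correct, and it is in fact the standard way to obtain this lemma with a clean constant. The arithmetic checks out: the periodic Selberg minorant $\sigma$ of $\indic_{I}$, $I=(-1/M,1/M)$, at degree $M$ has $\hat\sigma(0)=2/M-1/(M+1)>1/M$ and $\lvert\hat\sigma(m)\rvert\le 2/M+1/(M+1)<3/M$ for $1\le\lvert m\rvert\le M$; since $\sigma$ is real, the identity $\sum_n\sigma(x_n)=\hat\sigma(0)N+\sum_{0<\lvert m\rvert\le M}\hat\sigma(m)S_m$ is a real number $\le 0$, and symmetrising $m\leftrightarrow -m$ together with the triangle inequality yields
\begin{align*}
\frac{N}{M}<\hat\sigma(0)N\le 2\max_{1\le\lvert m\rvert\le M}\lvert\hat\sigma(m)\rvert\sum_{m=1}^{M}\lvert S_m\rvert<\frac{6}{M}\sum_{m=1}^{M}\lvert S_m\rvert,
\end{align*}
which is what is wanted. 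One small point worth spelling out: you need $\sigma(x_n)\le 0$ also when $\lVert x_n\rVert = 1/M$ exactly, i.e.\ at the endpoints of $I$. This is indeed supplied by the Beurling--Selberg construction (the extremal minorant of $\chi_{[a,b]}$ of exponential type vanishes at $a$ and $b$, and the periodised minorant of degree $K$ inherits this), but since you invoke the construction as a black box it would be cleaner to state this property explicitly, or equivalently to take the minorant of the closed interval $[-1/M,1/M]$ and record that it is nonpositive at the endpoints. Note also that the black box you need is the \emph{Selberg} minorant with the coefficient bound $\lvert\hat\sigma(m)-\widehat{\indic_I}(m)\rvert\le 1/(M+1)$; the Vaaler lemma as stated in the paper (\cref{Vaaler}) only records $\lvert c_h\rvert\le 1/(\lvert h\rvert+1)$, which is far too weak for small intervals, so it is right that you reach outside that lemma. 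Your closing remark that a straight Fej\'{e}r-kernel comparison fails is accurate: the Fej\'{e}r kernel is nonnegative everywhere, so it cannot serve as a minorant of the indicator, which is the property the argument actually leans on.
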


The following recent $k$-derivative test of Heath-Brown (see Theorem 1  in \cite{HBKDT}) will be crucial. The version stated below matches that appearing in \cite{Kumchev2019}.
\begin{lem}[Heath-Brown]\label{HBlemma}
Let $F$ and $X$ be large and assume $X\leq Y\leq 2X$. Let $k\geq 3$ be an integer, and $f: [X,Y]\to \mathbb{R}$ be a $k$-times continuously derivable function which satisfies the following 
\begin{align}\label{HBlemmaeq1}
FX^{-k}\ll \Big\vert f^{(k)}(x)\Big\vert \ll FX^{-k} \qquad x\in (X,X_1].
\end{align}
Then we have the estimate
\begin{equation}\label{Hblemmaeq2}
\sum_{X <n\leq X_1} e(f(n)) \ll X^{1+\epsilon} \times \left[\left(FX^{-k}\right)^\frac{1}{k(k-1)}+X^{-\frac{1}{k(k-1)}}+F^{-\frac{2}{k^2(k-1)}}\right].
\end{equation}
where the implicit constant above may depend upon those in \cref{HBlemmaeq1} and the level of differentiation $k$.
\end{lem}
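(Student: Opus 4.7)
The plan is to follow Heath-Brown's approach, which combines a Taylor-expansion-plus-short-interval trick with the sharp Vinogradov mean value theorem of Bourgain-Demeter-Guth (building on Wooley's earlier efficient congruencing). First I would partition $(X,X_1]$ into consecutive intervals of length $H$, a parameter to be chosen later, and on each such block centred at $x_0$ write $f(x_0+m)=P(m)+R(m)$ where $P$ is the degree $k-1$ Taylor polynomial of $f$ at $x_0$ and the remainder obeys $\lvert R(m)\rvert \ll FX^{-k}H^k$ by the derivative hypothesis \cref{HBlemmaeq1}. Requiring $H\le (X^k/F)^{1/k}$ forces the contribution of $R$ to the phase to be $O(1)$ and so negligible.

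It then remains to bound the polynomial exponential sum $\sum_{\lvert m\rvert\le H/2}e(P(m))$. The coefficient of $m^{k-1}$ in $P$ is $f^{(k-1)}(x_0)/(k-1)!$, which via Dirichlet can be compared to some rational $a/q$ with $q\le Q$. Weyl-type bounds for such sums follow from the sharp mean value estimate
\begin{equation*}
\int_{[0,1]^{k-1}}\Big\lvert\sum_{m\le H}e(\alpha_1 m+\cdots+\alpha_{k-1}m^{k-1})\Big\rvert^{s}\dd{\alpha_1}\cdots\dd{\alpha_{k-1}}\ll H^{s-k(k-1)/2+\epsilon}
\end{equation*}
for the critical exponent $s$ (Bourgain-Demeter-Guth), combined with Hölder's inequality. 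This yields, for each block, a bound of shape $H^{1-1/(k(k-1))+\epsilon}$ decorated by a factor in $q$ from the minor-arcs analysis. Summing over $\asymp X/H$ blocks and optimizing $H$ against three competing constraints produces the three terms in \cref{Hblemmaeq2}: the ``derivative-test'' term $(FX^{-k})^{1/(k(k-1))}$, the length-saturation term $X^{-1/(k(k-1))}$, and the minor-arcs term $F^{-2/(k^2(k-1))}$.

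The hard part is the third exponent. Pre-2015 versions of the argument, based on Wooley's then-available congruencing bounds, give a worse saving here; only the \emph{optimal} mean value theorem yields the clean $2/(k^2(k-1))$ exponent. Apart from that, the remaining work is careful bookkeeping: handling the edge cases where two of the regimes coincide, and ensuring that only a harmless $X^\epsilon$ loss is incurred along the way. Since the result is Theorem~1 of \cite{HBKDT} and also appears in the convenient form above in \cite{Kumchev2019}, the present paper invokes it as a black box rather than reproving it.
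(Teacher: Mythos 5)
Your proposal and the paper take genuinely different routes. You sketch the internal machinery of Heath-Brown's theorem itself --- Taylor-block decomposition of length $H$, the Bourgain--Demeter--Guth form of Vinogradov's mean value theorem, Dirichlet/minor-arc bookkeeping, and optimization over $H$ --- whereas the paper treats Heath-Brown's Theorem~1 of \cite{HBKDT} entirely as a black box and proves only the small adaptation it actually needs. That adaptation is the one thing your proposal does not engage with: the cited theorem is stated for intervals of length $\gg X$, while the lemma here permits the interval $(X,Y]$ with $X\le Y\le 2X$ to be arbitrarily short. The paper handles this by noting that if $Y-X\gg X$ the cited result applies directly (after a shift), and otherwise $2X-Y\gg X$, so one applies Heath-Brown's theorem to the two sums over $(X,2X]$ and $(Y,2X]$ and subtracts, using the triangle inequality. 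Your blockwise Taylor argument would in principle also cope with short intervals --- one simply has fewer blocks --- but you never make this point explicit, and you yourself close by observing that the paper invokes the theorem as a citation. So your write-up is a correct and informative outline of \emph{why} Heath-Brown's estimate holds, but it is not a proof of the stated lemma as the paper presents it: the only content of the paper's proof is the short-interval reduction, and your proposal leaves that implicit.
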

\begin{proof}
If $Y-X\gg X$, then the results follows plainly from \cite{HBKDT} after shifting the function by $X$. If $Y-X$ is of lower order, then $2X-Y\gg X$, so applying the first case to sums over intervals $X\leq n\leq 2X$ and $Y\leq n\leq 2X$ and using the triangle inequality, we recover \cref{Hblemmaeq2}.
\end{proof}

Finally, we need the following estimate for exponential sums over primes.
\begin{lem}[Prime exponential sums]\label{primesums}
Let $f$ be a dominant pseudo polynomial of degree $\theta>3$. Let $X^{-\frac{2}{3}\theta}\ll y\ll X^{\rho(1-\rho)}$, where
\begin{align}\label{rhodefinition}
\rho=\frac{1}{8\theta^2+12\theta+10}.
\end{align}
Then we have
\begin{align*}
\sum_{p\leq X} e\left(y f(p)\right)\ll X^{1-\rho+\epsilon}.
\end{align*}
\end{lem}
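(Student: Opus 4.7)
The plan is to attack the prime sum by Heath-Brown's identity (Lemma \ref{HBVaughan}), reducing it to bilinear Type I and Type II sums which are then estimated using the $k$-derivative test (Lemma \ref{HBlemma}). After a dyadic decomposition of $[2,X]$ and partial summation (at a cost of $X^\epsilon$), it suffices to bound $\sum_{n\sim X}\Lambda(n)e(yf(n))$. Applying Lemma \ref{HBVaughan} with $\phi(n)=e(yf(n))\indic_{X/2<n\le X}$ (so $\exceptionalF=1$) and parameters $U,V,Z$ chosen as suitable fractional powers of $X$ (subject to $Z\ge 4U^2$, $X\ge 64Z^2U$, $V^3\ge 32X$), the task reduces to estimating the two sums
\begin{align*}
K &:= \max_{N}\sum_{m}d_3(m)\Big|\sum_{Z<n\le N}e(yf(mn))\Big|, \\
L &:= \sup_{|g|\le d_3}\sum_m d_4(m)\Big|\sum_{U<n<V}g(n)e(yf(mn))\Big|.
\end{align*}

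\textbf{Type I sums.} Because $f$ is dominant of degree $\theta$ with non-integer leading exponent, for every integer $k$ sufficiently large in terms of $\theta$ (certainly $k\ge\lceil\theta\rceil$) the chain rule yields
\[
\Big|\tfrac{d^k}{dn^k}\parentheses{yf(mn)}\Big|\asymp y\,m^{\theta}n^{\theta-k},
\]
with implied constants depending on $f$ and $k$. Fixing such a $k$ and applying Lemma \ref{HBlemma} on each dyadic block $n\sim N_0$ with $Z\ll N_0\ll N$ produces three candidate bounds; summing them against $d_3(m)$ for $m\le X/N_0$ and then dyadically over $N_0$ gives the estimate for $K$.

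\textbf{Type II sums.} For $L$ I would first apply Cauchy--Schwarz in $m$ to separate $d_4(m)$ from the oscillation, then expand the resulting square in $n$ and swap the order of summation. Up to a diagonal contribution, this reduces matters to bounding sums of the form
\[
\sum_{U<n_1,n_2<V}g(n_1)\overline{g(n_2)}\sum_{m\sim M}e\parentheses{y(f(mn_1)-f(mn_2))},
\]
where $M\ll X/U$ and, regarded as a function of $m$, the off-diagonal phase is again polynomial-like of degree $\theta$ but with amplitude controlled by $|n_1-n_2|$. A second application of Lemma \ref{HBlemma}, now in the variable $m$, followed by summation over the shift, yields the desired bound on $L$. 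This step is precisely where, as announced in the introduction, we gain over \cite{Tichy2019}: by preserving the bilinear structure inside the Cauchy--Schwarz rather than majorising the inner sum pointwise first, one saves a power of $X$ that would otherwise be lost.

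\textbf{Optimisation and main obstacle.} It remains to choose $U,V,Z$ as powers of $X$ and $k$ as the integer nearest the balance point so that each of the terms produced by Lemma \ref{HBlemma} in both $K$ and $L$ is $\ll X^{-\rho+\epsilon}$, with $\rho=1/(8\theta^2+12\theta+10)$. The lower restriction $y\gg X^{-2\theta/3}$ enters precisely through the term $F^{-2/(k^2(k-1))}$ in \cref{Hblemmaeq2}, which is the only one growing as $y$ shrinks; the upper restriction $y\ll X^{\rho(1-\rho)}$ tames the complementary term $(FX^{-k})^{1/(k(k-1))}$. The main obstacle will be the bookkeeping of this optimisation: one must simultaneously respect the admissibility constraints of Lemma \ref{HBVaughan}, pick $k$ large enough that the leading asymptotics of $f^{(k)}$ are correct, and equate the six or so competing terms from $K$ and $L$ to $X^{1-\rho+\epsilon}$. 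The shape of the denominator $8\theta^2+12\theta+10$ indicates that the optimum is reached for $k$ roughly proportional to $\theta$, which then forces specific fractional powers of $X$ for $U,V,Z$.
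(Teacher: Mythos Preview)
Your framework coincides with the paper's: dyadic reduction to $\sum_{n\sim Y}\Lambda(n)e(yf(n))$ for $X^{1-\rho}\ll Y\ll X$, Heath--Brown's identity (\cref{HBVaughan}), and then Type~I/Type~II estimates via \cref{HBlemma}. The paper fixes the parameters explicitly as $U=Y^{2\rho}$, $V=4Y^{1/3}$, $Z$ the half-integer nearest $\tfrac19 Y^{1/2-\rho}$, so that the Type~I range is $M\ll Y^{1/2+\rho}$ and the Type~II range for the \emph{short} variable is $Y^{2\rho}\ll M\ll Y^{1/3}$; these are not chosen at the end by a global optimisation but are dictated in advance by the shape of the bilinear lemmas (\cref{TypeIestimate}, \cref{T2estimate}), which are proved separately.

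Two substantive refinements are missing from your sketch. First, in the Type~II estimate the paper does \emph{not} merely expand the square after Cauchy--Schwarz: it applies the Van der Corput (Weyl--shift) inequality to the short-variable sum with a truncation parameter $H=X^{2\rho-\epsilon}$, producing a differenced phase $F_h(mn)=y\bigl(f((m+h)n)-f(mn)\bigr)$ with $|h|\le H=o(M)$, and only then swaps and applies \cref{HBlemma} to the long sum. The truncation both introduces a balancing parameter (main term $X^2/H$) and, crucially, guarantees $|h|\ll m$, so that the asymptotic $\partial_n^k F_h\asymp y|h|X^{\theta-1}N^{1-k}$ holds uniformly; your bare expansion allows the shift to be as large as the short variable itself, and the derivative control for the differenced phase then fails at the extremes. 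The paper explicitly credits this step to Kumchev--Petrov as the source of its improved exponent. Second, both bilinear lemmas split into cases according to $\alpha$ (where $X^\alpha:=yX^\theta$): when $\alpha$ is near its lower end the paper replaces \cref{HBlemma} by a classical third-derivative test (Graham--Kolesnik, Theorem~2.9), since \cref{HBlemma} requires $k\ge 3$ while the balanced $k$ would be too small there. Your plan to use \cref{HBlemma} throughout, with $k$ ``roughly proportional to $\theta$'', does not cover this regime.
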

\begin{proof}
Consider the sum 
\begin{align*}
\sum_{n\leq X} \Lambda(n)e\left(yf(n)\right).
\end{align*}
The contribution from powers $p^k$, $k\geq 2$ is $\ll X^{\frac{1}{2}}$, thus, by partial summation it is enough to show that the sum above is $\ll X^{1-\rho+\epsilon}$. Splitting now the summation range into dyadic intervals and estimating trivially on intervals of size $\ll X^{1-\rho}$, we may consider only sums 
\begin{align*}
\sum_{n\sim Y} \Lambda(n)e\left(y f(n)\right)
\end{align*}
where $X^{1-\rho}\ll Y\ll X$. At this point, we appeal to \cref{HBVaughan} with parameters $U=Y^{2\rho}$, $V=4Y^{\frac{1}{3}}$, and $Z$ to be the half integer nearest to $\frac{1}{9}Y^{\frac{1}{2}-\rho}$. Now our sum decomposes as
\begin{equation}\label{proof:lemmaprimesums:split}
\sum_{n \sim Y} \Lambda(n) e\left( y f(n) \right)\ll f_0 + K\log X+ L \log^8 X,
\end{equation}
where 
\begin{align*}
K=\sum_{m=1}^\infty  \sum_{\substack{Z<n\leq Y\\ nm\sim Y}}a_m e\left(yf(mn)\right)
\end{align*}
and
\begin{align*}
L:=\sum_{\frac{Y}{2V}\leq m<\frac{Y}{U}} \sum_{\substack{mn\sim Y\\ U< n<V}} a_mb_n e\left( y f(mn)\right),
\end{align*}
where $(a_m)_m$ and $(b_n)_n$ are divisor bounded sequences of complex numbers. Now the sum
\begin{align*}
K=\sum_{m=1}^\infty d_3(n)\Big\vert \sum_{\substack{Z<n\leq Y\\ nm\sim Y}}e\left(yf(mn)\right)\Big\vert=\sum_{m=1}^\infty \sum_{\substack{Z<n\leq Y}} a_m e\left(yf(mn)\right),
\end{align*}
with $(a_m)_m\subset \mathbb{C}$, can be further decomposed  into $\ll \log X$ sub-sums of shape
\begin{align}\label{equation1:proofoflemmaexpsumsamongprimes:dominant:prime}
\sum_{m=1}^M\sum_{\substack{n\sim N\\ mn\sim Y}} a_m e\left(yf(mn)\right),
\end{align}
where $M\leq Y/Z\ll Y^{\frac{1}{2}+\rho}$. Proceeding similarly for the sum $L$, decomposing this into $\log^2 X$ sub-sums of shape
\begin{align}\label{equation2:proofoflemmaexpsums:dominant:prime}
\sum_{m \sim M}\sum_{\substack{n\sim N\\ mn \sim Y}} a_m b_n e\left(yf(mn)\right),
\end{align}
where $Y^{2\rho}=U<M<V=Y^{\frac{1}{3}}.$ 
The sums \cref{equation1:proofoflemmaexpsumsamongprimes:dominant:prime} and \cref{equation2:proofoflemmaexpsums:dominant:prime} can be estimated\footnote{The careful reader will notice that here the range for $y$ is given in terms of $X$, while the length of the sum is $Y$. However, the range in \cref{primesums} is thinner than that considered for \cref{TypeIestimate} and \cref{T2estimate}. As $Y\gg X^{1-\rho}$, one sees that the type I and type II estimates are applicable.} using \cref{T1estimate} and \cref{T2estimate}. Hence, by \cref{proof:lemmaprimesums:split} 
\begin{align*}
\sum_{n\sim Y} \Lambda(n)e\left(yf(mn)\right)\ll X^{1-\rho+\epsilon}.
\end{align*}

\begin{lem}\label{lemmamultiplesofm} Let $f$ be a dominant pseudo polynomial and let  $2\leq m\leq X^{\tilde{\rho}}$, where $\tilde{\rho}< \frac{1}{3}\rho$ (with $\rho$ as in \cref{rhodefinition}). Then, for $X$ sufficiently large there is a prime $p\ll X^{\frac{1}{3}+\epsilon}$ such that $\lfloor f(p)\rfloor$ is divisible by $m$. 
\end{lem}
\begin{proof}
	[Proof of \cref{lemmamultiplesofm}]
	We follow essentially \cite{Tichy2019}. Let $J$ be the interval $\left[0, \frac{1}{m}\right)$. Then $m\vert \lfloor f(p)\rfloor\Leftrightarrow \frac{f(p)}{m}\in [0, 1/m)$ modulo 1. 
	We want to show that
	\begin{align}\label{Sa}
	S_A(Y):=\# \{ p\leq Y: p \text{ prime} \text{  and  } m\vert \lfloor f(p)\rfloor\}=\sum_{\substack{p\leq Y\\ m\vert \lfloor f(p)\rfloor}} 1 >0
	\end{align}
	for $Y$ large enough, which will ensure the existence of a prime of the desired type. To this end we compare \cref{Sa} with the sum over all primes $\leq Y$, which we denote by $S_B$.
	\begin{align}\label{Sadiff}
	S_{A}-\frac{1}{m}S_{B}&=\sum_{p\leq Y}\left(1_J \left(\frac{f(p)}{m}\right)-\frac{1}{m}\right)\\
	&\ll \frac{1}{\log P}\max_{P\leq Y}\Big\vert \sum_{n\leq P}\Lambda(n)\left(1_{J}\left(f(n)/m\right)-\frac{1}{m}\right)\Big\vert +O\left(\sqrt{Y}\right).\nonumber
	\end{align}
	Set now $Y=X^{\frac{1}{3}+\epsilon}$, where $\epsilon>0$ small. Using  \cref{Vaaler} to smooth the characteristic function of $J$ we have
	\begin{align}\label{Sadiffexpsumestimate}
	\Big\vert \sum_{n\leq Y}\Lambda(n)\left(1_{J}\left(\frac{f(n)}{m}\right)-\frac{1}{m}\right)\Big\vert&\ll \sum_{1\leq h \leq H} \frac{1}{h}\sum_{n\leq Y}\Lambda(n)e\left(\frac{h}{m}f(n)\right)\\ \nonumber
	&+\frac{1}{H+1}\sum_{h\leq H} \left(1-\frac{\vert h\vert}{H+1}\right) \sum_{n\leq Y} \Lambda(n) e\left(\frac{h}{m}f(n)\right)\\\nonumber
	&\ll X^{(\frac{1}{3}+\epsilon)(1-\rho+\epsilon)},
	\end{align}
	where we have taken $H=Y^{\rho}$ and applied \cref{primesums}. Now by the prime number theorem and $m\leq X^{\tilde{\rho}}$ we have
	\begin{equation}\label{S_Blowerbound}
	\frac{1}{m}S_B\left(X^{\frac{1}{3}+\epsilon}\right)\gg\frac{X^{\frac{1}{3}+\epsilon}}{m \log X}\gg \frac{X^{\frac{1}{3}-\tilde{\rho}+\epsilon}}{\log X}.
	\end{equation}
	As $\tilde{\rho}< \frac{\rho}{3}$, the lower bound \cref{S_Blowerbound} dominates the upper bound \cref{Sadiffexpsumestimate} we conclude that $S_A(X^{\frac{1}{3}+\epsilon})\geq 1$.
\end{proof}

\section{Proof of \cref{Ourtheorem}}\label{proofofthetheorem}
We proceed by contradiction. Assume that   
\begin{equation}\label{contradeq}
\min_{2\leq p\leq X} \Vert \xi \lfloor f(p)\rfloor\Vert\geq X^{-\tilde{\rho}}
\end{equation}
for $0<\tilde{\rho}<\rho_d$, where $\rho_d:=\frac{1}{3}\rho$, and $\rho$ given as in the statement of \cref{primesums}. Set now $M:=\lfloor X^{\tilde{\rho}}\rfloor$. By \cref{contradeq} and \cref{montgomery} we are given an $m\leq M$ with
\begin{equation}\label{lowerbound}
\Big\vert \sum_{p\leq X} e\left(m\xi \lfloor f(p)\rfloor\right)\Big\vert\gg X^{1-\tilde{\rho}}.
\end{equation}
We proceed now in proving an upper bound for the left hand side of \cref{lowerbound}. To this end, we adopt the strategy used in \cite{Tichy2019} i.e to use digital expansion to remove the floor function.
Let now $q\geq 2$ be an integer parameter that will be specified later. Let  $0 \leq d \leq q-1$ and let $I_d$ be the interval (to be read modulo 1) $[d/q, (d+1)/q).$ Write $f(n)=\lfloor f(p)\rfloor + \{ f(p)\}$. Then $\{f(p)\}\in I_d$  precisely when $\{f(p)\}=\frac{d}{q}+\frac{\tau}{q}$ with $\tau\in [0,1)$, hence
\begin{align*}
e\left(m\xi \lfloor f(p)\rfloor\right)=e\left(m\xi f(p)-m\xi\frac{d}{q}\right)\left(1+O\left(\frac{m}{q}\right)\right).
\end{align*}
Then
\begin{equation}\label{proofeq1}
 \sum_{p\leq X} e\biggl(m\xi \lfloor f(p)\rfloor\biggr)= \sum_{d=0}^{q-1} \sum_{p\leq X} e\biggl(m\xi f(p)-m\xi\frac{d}{q}\biggr)1_{I_d}(f(p))+O\biggl(\frac{Xm}{q}\biggr).
\end{equation}
Now, an application of \cref{Vaaler} reduces the estimation of the sums on the right hand side of \cref{proofeq1} to those of the following three sums
\begin{enumerate}
\item \(\displaystyle \frac{1}{q}\Big\vert \sum_{p\leq X} e\left(m\xi f(p)\right)\Big\vert\),
\item \(\displaystyle \sum_{0\leq \vert h\vert\leq H} \frac{1}{h} \Big \vert \sum_{p\leq X} e\left((m\xi+h)f(p)\right)\Big\vert\),
\item \(\displaystyle \frac{1}{H+1}\sum_{\vert h\vert \leq H} \left(1-\frac{\vert h\vert}{H+1}\right)\Big\vert \sum_{p\leq X} e\left(hf(p)\right)\Big\vert, \)\end{enumerate}
where $H=X^{\rho+\epsilon}$. We assume now that $\Vert m\xi+h\Vert\gg X^{-2\theta/3+1/4}$. Now, an application of \cref{primesums} yields the bound 
\begin{align*}
(\ref{proofeq1})\ll qX^{1-\rho+\epsilon}\times \left( \frac{1}{q}+ \sum_{0<\vert h\vert \leq H} \frac{1}{\vert h \vert }+\frac{1}{H+1}\sum_{\vert h\vert \leq H} \left(1-\frac{h}{H+1}\right)\right)+ O\left(\frac{Xm}{q}\right).
\end{align*}
At this point we have to balance the two terms. The optimal selection is $q=\lfloor \sqrt{mX^\rho}\rfloor$. This gives us 
\begin{align*}
(\ref{proofeq1})\ll X^{1-\frac{\rho}{2}+\epsilon}m^{\frac{1}{2}}\ll X^{1-\frac{\rho}{2}+\frac{\tilde{\rho}}{2}+\epsilon},
\end{align*}
which together with \cref{lowerbound} gives us a contradiction whenever 
\begin{align}\label{condition1}
\tilde{\rho}<\rho_d=\frac{\rho}{3}.
\end{align}
We still have to deal with the possibility that $\Vert m\xi+h\Vert\ll X^{-2\theta/3+1/4}$. In this case, if $m=1$ then we have
\begin{align*}
\Vert \xi\lfloor f(p)\rfloor \Vert \ll X^{-\theta/3+1/4}\ll X^{-1/12},
\end{align*}
contradicting \cref{contradeq}. If otherwise $m>1$ then  \cref{lemmamultiplesofm} provides us a prime $p\ll_{k,\theta} X^{1/3+\epsilon}$ such that $m\vert \lfloor f(p)\rfloor$, whence, for this specific $p$ we have
\begin{align*}
\Vert \xi \lfloor f(p)\rfloor \Vert\ll \Vert m\xi\Vert \frac{\lfloor f(p)\rfloor}{m}\ll X^{-\frac{2}{3}\theta+1/4} X^{(\frac{1}{3}+\epsilon)\theta}\ll X^{-\frac{\theta}{3}+\theta\epsilon+1/4}\ll X^{-\frac{1}{20}},
\end{align*}
which clearly contradicts \cref{contradeq}. This completes the proof.
\end{proof}

\section{Exponential sums estimates for  and Type II sums.}
The estimation ef exponential sums done by Madritsch and Tichy in \cite{Tichy2019} relied on Weyl-Van Der Corput differencing together with some derivative estimate (see \cite{TichyBergelson} and Section 4 of \cite{Tichy2019}). The use of Weyl differencing and classical derivative tests leads to an exponent factor of type $1-\frac{1}{2^k}$. This is rather unpleasant in case the degree of the pseudo polynomial (and  therefore the required differentiation level) is high. We remedy this  developing our estimates from \cref{HBlemma}. As mentioned in the introduction, during the preparation of the present note, we came across the paper of Kumchev and Petrov \cite{Kumchev2019}, where exponential sums of similar shape were estimated. From this paper, we borrowed the application of the Van Der Corput inequality in Type II sums, which leads to a slightly better saving than the one we originally obtained. The saving exponent we obtain in \cref{TypeIestimate} and \cref{T2estimate} is almost equivalent to the saving obtained in \cite{Kumchev2019}. 
The first two terms cannot be improved without a substantial change in the estimation method, while one could optimize the constant term in the denominator. However, we think that the exponent we provided is more transparent. The proof of this estimate is a direct application of \cref{HBlemma} and the standard repertory concerning derivative tests, see \cite{Kolesnik}.
\begin{lem}[Type I]\label{TypeIestimate}
Let $f$ be a dominant pseudo polynomial of degree $\theta>3$ and let $(a_n)_{n\in \mathbb{N}}$ be a divisors bounded sequence. Set 
\begin{align*}
\rho=\frac{1}{8\theta^2+12\theta +10},
\end{align*}
and let $X^{-\frac{2}{3}\theta}\ll y\ll X^{\rho}$,  $M\ll X^{{1/2}+\rho}.$ Then we have
\begin{equation}\label{T1estimate}
\sum_{m \leq M} \sum_{mn \sim X} a_m e(yf(mn))\ll X^{1-\rho+\epsilon}.
\end{equation}
\end{lem}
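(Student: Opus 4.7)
The plan is to fix $m$, estimate the inner sum over $n$ using Heath-Brown's $k$-derivative test (\cref{HBlemma}), and then sum trivially over $m$ using the divisor bound on $a_m$.

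For each fixed $m\le M$, set $N:=X/m$, so the inner sum is $\sum_{n\sim N,\,mn\sim X} e(g_m(n))$ with phase $g_m(n):=yf(mn)$. Since $f$ is a dominant pseudo polynomial of degree $\theta>3$ with positive leading coefficient and non-integer leading exponent, the leading term of $f^{(k)}$ does not vanish for any integer $k\ge 1$ and dominates all lower-order (pseudo)polynomial contributions; hence $|f^{(k)}(x)|\asymp x^{\theta-k}$ for $x$ large. A short computation gives $|g_m^{(k)}(n)| = y m^k |f^{(k)}(mn)| \asymp y X^{\theta} N^{-k}$ uniformly for $n\sim N$, so \cref{HBlemma} applies with $F:=yX^{\theta}$ at level of differentiation $k$, yielding
\[
\Big|\sum_{n\sim N,\,mn\sim X} e(yf(mn))\Big| \ll N^{1+\epsilon}\Big[(F/N^{k})^{\frac{1}{k(k-1)}} + N^{-\frac{1}{k(k-1)}} + F^{-\frac{2}{k^{2}(k-1)}}\Big].
\]

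Multiplying by $|a_m|\ll m^\epsilon$ and summing over $m\le M$ produces three contributions $\Sigma_{1},\Sigma_{2},\Sigma_{3}$ corresponding to the three terms in the bracket. Since $F=yX^{\theta}$ is independent of $m$, the third piece collapses to $\Sigma_{3}\ll X^{1+\epsilon}F^{-2/(k^{2}(k-1))}$, while the first and second combine with the sum in $m$ (using $N=X/m$ and, for $k\ge 3$, that the ensuing power of $m$ is summable) to give
\[
\Sigma_{1}\ll y^{\frac{1}{k(k-1)}} X^{1+\frac{\theta-k}{k(k-1)}+\epsilon} M^{\frac{1}{k-1}},\qquad \Sigma_{2}\ll X^{1-\frac{1}{k(k-1)}+\epsilon} M^{\frac{1}{k(k-1)}}.
\]
Substituting $y\le X^{\rho}$, $M\le X^{1/2+\rho}$ (and, for $\Sigma_{3}$, the lower bound $F\ge X^{\theta/3}$ coming from $y\ge X^{-2\theta/3}$), each exponent becomes an explicit rational function of $\theta$, $k$ and $\rho$.

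The final step is to choose $k$, an integer somewhat larger than $2\theta$ (the natural choice being around $\lceil 2\theta\rceil+1$), and verify that with $\rho=1/(8\theta^{2}+12\theta+10)$ each of the three resulting exponents is at most $1-\rho$. The main obstacle is the balancing act: $\Sigma_{1}$ forces $k$ to exceed $2\theta$ so that $(\theta-k)/(k(k-1))$ becomes negative enough to absorb the factor $M^{1/(k-1)}$, while $\Sigma_{3}$ deteriorates with $k$ since its saving is only $X^{-2\theta/(3k^{2}(k-1))}$; the $\Sigma_{2}$-term is comparatively harmless because $M\le X^{1/2+\rho}$ is small. After pinning down $k$, the verification reduces to checking inequalities of the shape $\rho\le(k-2\theta)/(2(k^{2}+1))$ for $\Sigma_{1}$ and $\rho\le 2\theta/(3k^{2}(k-1))$ for $\Sigma_{3}$, from which the quadratic denominator $8\theta^{2}+12\theta+10$ emerges by straightforward (if tedious) bookkeeping.
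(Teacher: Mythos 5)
Your framework is the same as the paper's: fix $m$, apply \cref{HBlemma} to the inner sum, then sum over $m$ using the divisor bound, and your computation of the three resulting contributions $\Sigma_1,\Sigma_2,\Sigma_3$ and the ensuing inequalities is correct. The gap is in the claim that these inequalities ``can be checked'' with a \emph{fixed} choice $k\approx\lceil 2\theta\rceil+1$. Concretely, your $\Sigma_3$-condition $\rho\le\frac{2\theta}{3k^2(k-1)}$ fails for this $k$: taking $k=2\theta+1$ (the most favourable case) gives
\[
\frac{2\theta}{3k^2(k-1)}=\frac{2\theta}{3\cdot 2\theta(2\theta+1)^2}=\frac{1}{12\theta^2+12\theta+3},
\]
and since $12\theta^2+12\theta+3>8\theta^2+12\theta+10$ whenever $4\theta^2>7$ (so for all $\theta>3$), one has $\frac{1}{12\theta^2+12\theta+3}<\rho$, not $\geq\rho$. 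Decreasing $k$ to rescue $\Sigma_3$ breaks the $\Sigma_1$-condition $\rho\le\frac{k-2\theta}{2(k^2+1)}$, which needs $k>2\theta$; so no single integer $k$ handles the whole range $X^{-2\theta/3}\ll y\ll X^{\rho}$.

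The paper resolves this by making $k$ depend on $y$: writing $X^\alpha=yX^\theta$ (so $\theta/3\le\alpha\le\theta+\rho$), it selects $k=\lceil\alpha/(1/2-\rho)\rceil+1$, so that when $y$ (hence $F=yX^\theta$) is small, $k$ is correspondingly small and the $F^{-2/(k^2(k-1))}$ term stays under control. This adaptive choice makes the first HB-term comparable to the second, leaving only the second and third to balance, and the resulting condition $k(k-1)\le\frac{1/2-\rho}{\rho}=4\theta^2+6\theta+4$ is exactly what the denominator $8\theta^2+12\theta+10$ is engineered to satisfy. The paper also separates off the range $\alpha\in[1,1+2\rho)$ (where the adaptive $k$ would drop below $3$) and handles it with a third-derivative (van der Corput/Kolesnik) test instead of \cref{HBlemma}. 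You should incorporate both of these ideas — the $y$-dependent choice of $k$ and the fallback estimate for small $\alpha$ — to make the argument close.
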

\begin{proof}
We denote the left side of \cref{T1estimate} by $S$ and we write $X_m$ for $X/m$. Since the sequence $a_m$ is divisor bounded, we have
\begin{align*}\label{Type1}
S\ll X^\epsilon\times \sum_{m\leq M}\Big\vert \sum_{n\sim X_m} e(yf(mn))\Big\vert.
\end{align*}
Let us write $X^\alpha = y X^\theta$. Notice that by our assumptions we have $1<\theta/3\leq \alpha\leq \theta+\rho$. Now, on the inner summation range we have 
\begin{align*}
X^{\alpha}X_m^{-k}\ll_{k, \theta} y\partial_{n}^k f(mn)\ll_{k,\theta}  X^{\alpha}X_m^{-k},
\end{align*}
where the implicit constant depends upon $\theta$ and $k$ but not on $X$. We will estimate the inner sum over $n$ using an appropriate derivative test.
\\
Assume for the moment $\alpha> 1+2\rho$. We select the degree of differentiation in such a way, the first term inside the brackets in \cref{Hblemmaeq2} is $ X^{-2}\ll$ and $\ll X^{-1}$. Because of the condition $m\ll X^{\frac{1}{2}+\rho}$, this means $1+\frac{\alpha}{1/2-\rho}\leq k< 2 + \frac{\alpha}{1/2-\rho}$. We select $k=\lceil \frac{\alpha}{1/2-\rho}\rceil+1$. An application of \cref{HBlemma} (which is applicable in the given range, as $k$ would be larger or equal than 3) leads to 
\begin{align*}
\sum_{m\leq M}\sum_{n\sim X/m} e\left(yf(mn)\right)&\ll \sum_{m\leq M} \left(X_m^{1-\frac{1}{k(k-1)}+\epsilon}+X_m^{1+\epsilon}X^{-\frac{2\alpha}{k^2(k-1)}}\right)\\
&\ll X^{1-\frac{1/2-\rho}{k(k-1)}+\epsilon}+X^{1-\frac{2\alpha}{k^2(k-1)}+\epsilon}\\
&\ll X^{1-\frac{1/2-\rho}{k(k-1)}+\epsilon}\\
&\ll X^{1-\rho+\epsilon},
\end{align*}
where we used that $\alpha>1+2\rho$ and the definition of $\rho$.
\\
If otherwise $\alpha\in [1, 1+2\rho)$ then we evaluate the inner sum using Lemma  2.9 of \cite{Kolesnik}. Summing over $m$ gives us
\begin{align*}
\sum_{m\leq M} X_mX^{-\alpha}+ X_m^\frac{1}{2}X^{\frac{\alpha}{6}}\ll X^{1-\alpha+\epsilon}+M^\frac{1}{2}X^{\frac{\alpha}{6}+\frac{1}{2}}\ll X^{1-\alpha+\epsilon}+X^{\frac{3}{4}+\frac{\alpha}{6}+\frac{\rho}{2}}\ll X^{1-\rho+\epsilon},
\end{align*}
where the last equation is a consequence of $\rho<\frac{1}{72}$.
\end{proof}

\begin{lem}[Type II]\label{T2estimate}
Let $f$ be a dominant pseudo polynomial of degree $\theta>3$. Let $(a_m)_{m\in \mathbb{N}}$ and $(b_n)_{n\in\mathbb{N}}$ be divisors bounded sequences. Let $X^{2\rho}\ll M\ll X^{\frac{1}{3}}$, $\rho$ as in \cref{TypeIestimate} and assume that
\begin{align*}
X^{-\frac{2}{3}\theta}\ll y\ll X^{\rho +\epsilon}.
\end{align*}
Then we have
\begin{align*}
\sum_{m\sim M}\sum_{\substack{n\sim N\\ mn\sim X}} a_mb_n e(yf(mn))\ll X^{1-\rho+\epsilon}.
\end{align*}
\end{lem}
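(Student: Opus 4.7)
The strategy mirrors the Type I argument of \cref{TypeIestimate} but is more delicate because the unknown coefficients $b_n$ obstruct a direct derivative test and the product $mn$ is not separated. As announced in the discussion preceding \cref{TypeIestimate}, the approach borrowed from Kumchev--Petrov \cite{Kumchev2019} is to combine Cauchy--Schwarz with van der Corput's A-process, and then to apply \cref{HBlemma} to the resulting shifted phase.

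Writing $S$ for the left-hand side of the inequality in \cref{T2estimate}, I would first apply Cauchy--Schwarz in $m$ to eliminate the coefficients $a_m$, yielding
\[
|S|^2 \ll M^{1+\epsilon}\sum_{m\sim M}\Big|\sum_{\substack{n\sim N\\ mn\sim X}} b_n\, e\bigl(yf(mn)\bigr)\Big|^2,
\]
after which van der Corput's A-process with a parameter $1\leq H\leq N$ reduces matters to estimating, up to the diagonal contribution of size $O(MN^2/H)\cdot X^\epsilon$, the averaged sum
\[
\frac{N}{H}\sum_{0<h\leq H}\Big|\sum_{m\sim M}\sum_{n} \overline{b_n}b_{n+h}\, e\bigl(y[f(m(n+h))-f(mn)]\bigr)\Big|.
\]
For each fixed $h\neq 0$, the phase $g_h(n):=y(f(m(n+h))-f(mn))$ satisfies, by the mean value theorem applied to $f^{(k+1)}$, the estimate $\partial_n^k g_h\asymp yhm^{k+1}(mn)^{\theta-k-1}$, which in the notation of \cref{HBlemma} amounts to $F\asymp yhmX^{\theta-1}$ for an inner $n$-sum of length $N=X/m$. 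Feeding this into \cref{Hblemmaeq2} and summing over $h$ and $m$ should deliver, after choosing the appropriate derivative order $k$, a bound of the form $X^{2(1-\rho)+\epsilon}$ for $|S|^2$, whence the claim on taking square roots.

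The crux is the parameter balance. The order $k$ must be chosen so that the leading term $(FN^{-k})^{1/(k(k-1))}$ in \cref{Hblemmaeq2} is absorbed below $N^{-\delta}$ for a sufficient $\delta>0$; since $F$ satisfies $F\ll X^{\rho+\epsilon}\cdot H\cdot X^{1/3}\cdot X^{\theta-1}$, the hypotheses $M\ll X^{1/3}$ and $y\ll X^{\rho+\epsilon}$ are essential to keep $F$ moderate, and the upper bound $X^{2\rho}\ll M$ in the other direction is what kills a corresponding second term coming from the van der Corput diagonal. The diagonal $MN^2/H$ must simultaneously be balanced against the non-diagonal bound, forcing a polynomial-size choice of $H$. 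As in the Type I proof, a separate sub-case for small effective exponent $\alpha$ (where $yX^{\theta}\lesssim X$ and thus \cref{HBlemma} cannot be applied at any $k\geq 3$ with useful savings) will likely have to be handled via a classical second-derivative estimate in the spirit of Lemma 2.9 of \cite{Kolesnik}. The hard part is the bookkeeping in verifying that the constant $8\theta^2+12\theta+10$ in the definition of $\rho$ is adequate to close the balance across every admissible value of $k$, precisely as was tuned in the Type I argument.
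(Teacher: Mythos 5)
Your proposal hits a genuine snag: after Cauchy--Schwarz in $m$ (to remove $a_m$) followed by van der Corput's A-process in $n$, the off-diagonal term you display,
\[
\frac{N}{H}\sum_{0<h\leq H}\Big|\sum_{m\sim M}\sum_{n} \overline{b_n}b_{n+h}\, e\bigl(y[f(m(n+h))-f(mn)]\bigr)\Big|,
\]
carries the shifted coefficients $\overline{b_n}b_{n+h}$ on the $n$-variable. You then compute $\partial_n^k g_h$ and feed it into \cref{HBlemma} as if the inner $n$-sum were a pure exponential sum, but a derivative test cannot be applied to an $n$-sum with unknown weights attached. With your ordering, the only coefficient-free variable left is $m$, and the $m$-range is the short one ($M\ll X^{1/3}$); running the derivative test there does not recover a saving of size $X^{-\rho}$ with $\rho=(8\theta^2+12\theta+10)^{-1}$, so the loss is not merely cosmetic.

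The paper avoids this by making the opposite pairing: Cauchy--Schwarz in $n$ (to remove $b_n$), then van der Corput applied to the inner $m$-sum, which places the shift on $m$ and produces weights $a_m\overline{a_{m+h}}$. After interchanging summation, those weights are pulled out over $m$ and $h$, leaving a coefficient-free inner sum over the \emph{long} variable $n$ with phase $F_h(mn)=y\bigl(f((m+h)n)-f(mn)\bigr)$, to which \cref{HBlemma} (and, in the low-$\alpha$ regime, Kolesnik's third-derivative test) is applied with $F\asymp y|h|X^{\theta}/m$ and $\tau=2\rho-\epsilon$. Note also that your computed $F\asymp yhmX^{\theta-1}$ is smaller than the paper's $F\asymp y|h|X^{\theta}/m$ by a factor $m^2/X\ll X^{-1/3}$ --- a symptom of having taken the differences in the wrong variable. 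To repair the argument, swap the roles of $m$ and $n$ throughout (CS over $n$, vdC over $m$, derivative test in $n$); the diagonal $M^2N^2/H$ analysis and the two-regime split you sketch are then essentially correct.
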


\begin{proof}
We shall assume that neither of the two coefficient sequences $(a_m)_{m\in\mathbb{N}}$ and $(b_n)_{n\in\mathbb{N}}$ is identically zero in the relevant ranges, otherwise the claimed bound holds trivially. Now, an application of Cauchy's inequality gives us 
\begin{equation}\label{eq1pflemmaT2estimatelarge}
\vert S\vert^2 \le \left(\sum_{n\sim N} \vert b_n\vert^2 \right) \sum_{n\sim N} \Big\vert \sum_{\substack{m \sim M\\ nm \sim X}} a_m e(yf(mn))\Big\vert^2.
\end{equation}
We estimate the inner sum via the Van Der Corput lemma (see e.g. Chapter 2 of \cite{Montgomery10lect}) with $H=X^\tau$ for some small $\tau$ that will be specified later. This yields 
\begin{align*}
\Big\vert \sum_{\substack{m \sim M\\ nm \sim X}} a_m e(yf(mn))\Big\vert^2&\ll \frac{M^2}{H}\log^3 M \\
&+ \frac{M}{H}\sum_{0<\vert h\vert \leq H} \left(1-\frac{\vert h\vert}{H+1}\right)\ \sum_{\substack{m\sim M\\ nm\sim X\\ (m+h)n\sim X}} a_m^\star a_{m+h} e\left(y(f[m(n+h)]-f[mn]\right),
\end{align*}
where the first term arise from collecting the terms at $h=0$. At this point we insert this in \cref{eq1pflemmaT2estimatelarge}, then we sum over $n$, use the fact that the sequences are divisor bounded, and change summation order in the sum above, obtaining
\begin{align}\label{S2norm}
S^2 &\ll \left(\sum_{n\sim N} \vert b_n\vert^2\right)\\
&\quad\times \Biggl(\frac{M^2N}{H}\log^3 M +\frac{M^{1+\epsilon}}{H}\sum_{0<\vert h\vert \leq H} \left(1-\frac{\vert h\vert}{H+1}\right) \sum_{m\sim M}  \Big\vert \sum_{\substack{n\sim N\\ mn\sim X\\ (m+h)n\sim X}} e\left(F_h(mn)\right)\Big\vert\Biggr),\nonumber
\end{align}
where we used the abbreviation $F_h(mn)=y\left(f((m+h)n)-f(mn)\right)$ and $\epsilon>0$. Notice now that we can suppress the third condition in the innermost sum above at the price of an additive error about $N^2MH$ in the bound for $S^2$, which is acceptable. Now we estimate the inner sum using a suitable derivative test. Write again for shortness $X^\alpha=yX^\theta$ and $X_m=X/m$. Notice that since $\tau$  will be chosen smaller that $2\rho$ (whence $H=o(M)$), we have
\begin{align*}
\partial_n^k F_h(mn)=y\left((m+h)^\theta-m^\theta\right)N^{\theta-k}\asymp yX^{\theta-1} \vert h\vert N^{1-k}=X^{\alpha-1}\vert h\vert X_m^{1-k}
\end{align*}
for $n$ in the given range. Now we must distinguish between some cases:
\\
Assume that $\alpha\geq 121/60$. We apply \cref{HBlemma} with $F=\frac{y\vert h\vert X^\theta}{m}$, and select the differentiation level\footnote{See discussion in the proof of the previous lemma}  to be $k=\lceil \frac{3}{2}\left(\alpha-1+\tau\right)\rceil+2$. We have 
\begin{align*}
\Big\vert \sum_{\substack{n\sim N\\ mn\sim X}} e\left(F_h(mn)\right)\Big\vert &\ll X_m^{1+\epsilon}\left(X_m^{-\frac{1}{k(k-1)}}+F^{-\frac{2}{k^2(k-1)}}\right)\\
&\ll X_m^{1+\epsilon}\left[X_m^{-\frac{1}{k(k-1)}}+\left(\frac{\vert h \vert X^\alpha}{m}\right)^{-\frac{2}{k^2(k-1)}}\right].
\end{align*}
Thus, summing over $m\sim M$ we obtain
\begin{align*}
S^2& \ll \left(\sum_{n\sim N} \vert b_n\vert^2\right)\times \left[\frac{M^2N}{H}\log^3 M +\frac{M^{1+\epsilon}}{H}\sum_{1\leq h\leq H} \left(X^{1-\frac{2}{3k(k-1)}}+\vert h\vert^{-\frac{2}{k(k-1)}}X^{1-\frac{2\alpha}{k^2(k-1)}+\epsilon}\right)\right]\\
& \ll\frac{X^{2+\epsilon}}{H}+X^{2-\frac{2}{3k(k-1)}+\epsilon}+H^{\frac{2}{k^2(k-1)}}X^{2-\frac{2\alpha-2/3}{k^2 (k-1)}+\epsilon}	
\end{align*}
At this point we select $\tau=2\rho-\epsilon$. Because of $\alpha \leq \theta+\rho$ and our selection of $k$, we see that the first term dominates the second. We also notice that the second term dominates the third precisely when 
\begin{align*}
\frac{2}{3k(k-1)}\geq \frac{\left(2\rho-2\alpha+\frac{2}{3}\right)}{k^2(k-1)}
\end{align*}
which happens when $\alpha>2+4\rho$. Because of the fact $\rho<1/72$, the theorem is proved for $\alpha$ in the given range.
\\\
If otherwise we are in the range $1<\alpha<\frac{166}{60}$ then we can estimate  \cref{S2norm} using Theorem 2.9 of \cite{Kolesnik}. We have $\partial_n^{(3)} F_h(mn) \asymp  X^{\alpha-1} \vert h\vert X_m^{-2}$. Then we have
\begin{align*}
S^2&\ll N^{1+\epsilon}\biggl(\frac{M^2N}{H}\log^3 M +\frac{M^{1+\epsilon}}{H}\sum_{0<\vert h\vert \leq H} \sum_{m\sim M}  \Big\vert \sum_{\substack{n\sim N\\ mn\sim X}} e\left(F_h(mn)\right)\Big\vert\biggr)\\
&\ll N^{1+\epsilon} \biggl( \frac{M^2N}{H}\log^3 M +\frac{M^{1+\epsilon}}{H}\sum_{0<\vert h\vert \leq H} \sum_{m\sim M} \left(X^{\frac{\alpha-1}{6}}\vert h\vert^{\frac{1}{6}}N^{\frac{1}{2}}+\frac{X^{1-\alpha}}{\vert h\vert} \right)\biggr)\\
&\ll \frac{X^{2+\epsilon}}{H}+H^{\frac{1}{6}}X^{\frac{3}{2}+\frac{\alpha}{6}+\epsilon}+\frac{M}{H}X^{2-\alpha+\epsilon}.
\end{align*}
Selecting $\tau=2\rho-\epsilon$ as above and recalling $M\ll X^{1/3}$, we conclude once again
\begin{align*}
S\ll X^{1-\rho+\epsilon}.
\end{align*}
\end{proof}

\bibliographystyle{plainnat}
\bibliography{PPatprimes}

\end{document}